\documentclass[reqno]{amsart}
\usepackage{amssymb}
\usepackage{mathrsfs}
\usepackage{amsmath,amstext,amsfonts,verbatim}

\usepackage[latin1]{inputenc}

\usepackage{amsmath,amsthm,amssymb}

\usepackage{amsfonts}

\usepackage{amsmath,amssymb}
\usepackage{graphicx}
\usepackage{color}
\usepackage{indentfirst}

\setlength{\parskip}{2pt} \setlength{\textwidth}{15.2cm}
\setlength{\oddsidemargin}{.5cm}
\setlength{\evensidemargin}{0.5cm} \setlength{\textheight}{23cm}
\setlength{\topmargin}{-1cm} \setlength{\footskip}{1.5cm}

\newtheorem{theorem}{Theorem}[section]

\theoremstyle{definition}
\newtheorem{definition}[theorem]{Definition}

\newtheorem{prop}{Proposition}[section]

\theoremstyle{remark}

\numberwithin{equation}{section}

\newcommand{\neweq}[1]{\begin{equation}\label{#1}}

\def\phi{\varphi}

\def\incep{\left\{\begin{array}{cl} }
 \def\termin{\end{array}\right. }
\def\2af{2^*_\alpha}

\begin{document}

\title [Conditional entropy for amenable group actions]{\textbf{Conditional entropy for amenable group actions}}

\author{Yuan Lian $^{\ast}$}
\address{College \  of \ Mathematics \  and \ Statistics\ , Taiyuan Normal University, Taiyuan, 030619, China}
\email{andrea@tynu.edu.cn}
\thanks{$^{\ast}$Corresponding author: andrea@tynu.edu.cn}


\author{Bin Zhu}
\address{School \ of \ Mathematics \ and \ Statistics\ , Chongqing Technology and Business University, Chongqing, 400067, China}
\email{binzhucqu@163.com}
\thanks{The research was supported by Fundamental Research Program of Shanxi Province (No.20210302123322), the Science and Technology Research Program of Chongqing Municipal Education
Commission (Grant No.KJQN202400826) and Research Project of Chongqing Technology and Business University(No. 2256015).}

\keywords{Conditional entropy, factor space, amenable group}


\date{}
\subjclass[2010]{37A35,37B40.}

\begin{abstract}
Let $G$ be an infinite discrete countable amenable  group acting continuously on a Lebesgue space $(X,\mathcal{B},\mu)$. In this article, using partition and factor-space, the conditional entropy of the action $G\curvearrowright^{T} (X, \mu)$ is defined. We introduction some properties of conditional entropy for amenable group actions and the corresponding decomposition theorem is obtained.
\end{abstract}

\maketitle


\section{Factor space}

For the purpose of this article, this part mainly reviews some basic knowledge about factor space, which can be found in \cite{Roh,Roh1}.

Let $(X,\mathcal{B},\mu)$ be a Lebesgue space. If the elements of a collection are disjointed and combined to form $X$, we call it a partition of $X$. If a subset of $X$ can be represented as the union of some elements in a partition $\alpha$ of $X$, then we call it the $\alpha$-set of $X$. We can define an equivalence relation on $\mathcal{B}$ as follows: $A$ and $B$ are equivalent if and only if $\mu(A\triangle B)=0$. This article uses $\overline{\mathcal{B}}$ to represent this set of equivalence classes. The operations in sets in $\mathcal{B}$ of countable union, countable intersection, and subtraction can be passed to the same operation on classes $\overline{\mathcal{B}}$  so that  $\overline{\mathcal{B}}$  is a subalgebra. If the subset of $\overline{\mathcal{B}}$ is closed for the above operation, it is called a sub-$\sigma$-algebra of $\overline{\mathcal{B}}$.

For any measurable partition $\alpha$ , use $\overline{\mathcal{B}}(\alpha)$  to represent a sub-$\sigma$-algebra of $\overline{\mathcal{B}}$ consisting of measurable $\alpha$-sets classes. If $\overline{\mathcal{B}}(\alpha)=\overline{\mathcal{B}}(\alpha')$, then $\alpha=\alpha'$, and for any sub-$\sigma$-algebra of $\overline{\mathcal{B}}$, there exists a measurable partition $\alpha$ such that $\overline{\mathcal{B}}(\alpha)$  equals this subalgebra. Therefore, there is a one-to-one correspondence between the sub-$\sigma$-algebras of $\overline{\mathcal{B}}$ and the classes of mod 0-equal measurable partitions.

For any point $x$ in $X$, we will use $\alpha(x)$ to represent the elements in measurable partition $\alpha$ that contain $x$. Let $\alpha,\beta$ be two measurable partitions of $X$. We use $\alpha\leq\beta$ to represent $\beta(x)\subset\alpha(x)$ for $\mu$-almost every $x\in X$.

The factor-space of a compact metric space $X $ with respect to a partition $\alpha$  is a quotient space, constructed by collapsing each subset in the partition into a single point and the corresponding measures are defined as follows: let $p$ be a mapping that maps each point $x$ in $X$ to an element in the partition $\alpha$ that includes this point; a set $A$ is measurable if $p^{-1}(A)$ is measurable in $X$, and we define the measure $\mu_{\alpha}(A)=\mu(p^{-1}(A))$ of $A$. We denote this factor-space by $X/\alpha$. When $\alpha$ is a measurable partition, $X/\alpha$ is a Lebesgue space. This construction depends on the properties of $\alpha$ to ensure the resulting space retains desirable topological structure. The factor-space $X/\alpha$ is the set of equivalence classes under the relation $x\sim y \iff x, y \in P_i$ for some $P_i \in \alpha$. Points in  $X/\alpha$ correspond to the partition elements $P_i$.

\section{Group actions}

The following definitions can be referred to \cite{KL1}. In recent decades, the entropy theory for an action of an amenable group had obtained an extensive development \cite{KL,KL1,KY,OJ,OW}.

An amenable group $G$ (e.g. $\mathbb{Z}^{d}$, abelian groups, or solvable groups) is a countable discrete group that admits a F{\o}lner sequence ( a sequence of finite subsets $F_{n} \subset G$ that $approximate$ invariance under translation). Formally, for all  $g \in G$,
$$
\lim_{n \to \infty} \frac{|g F_n \triangle F_n|}{|F_n|} = 0,
$$
where \( \triangle \) denotes symmetric difference. This property allows averaging over the group in a consistent way. Throughout this paper, $G$ is an infinite countable discrete amenable group. Its identity element will always be denoted by $e$.

For a probability-measure-preserving (p.m.p.) action of $G$ on a space $ (X, \mu)$, the Kolmogorov-Sinai entropy generalizes to mean entropy (or entropy rate) using F{\o}lner sequences $\{F_{n}\}$. For a partition $\xi$ of $X$, the entropy is:
$$
h_\mu(G, \xi) = \lim_{n \to \infty} \frac{1}{|F_n|} H_\mu\left( \xi^{F_{n}}\right),
$$
where $\xi^{F_{n}}=\bigvee_{g \in F_n} g^{-1} \xi $ and $ \bigvee$ denotes the join of partitions. This measures the asymptotic uncertainty per group element.

\begin{definition}
By an action of the group $G$ on $X$ we mean a map $T:G\times X\longrightarrow X$ such that, writing the first argument as a subscript, $T_{s}(T_{t}(x))=T_{st}(x)$ and $T_{e}(x)=x$ for all $x\in X$ and $s,t\in G$. Most of the time we will write the image of a pair $(s,x)$ written as $sx$.
\end{definition}

\begin{definition}
By a p.m.p.(probability-measure-preserving)action of $G$, we mean an action of $G$ on a standard probability space $(X, \mu)$ by measure-preserving transformations. In this case, we will combine together the notion and simply write $G\curvearrowright^{T} (X, \mu)$.
\end{definition}

\section{Mean conditional entropy}

The content regarding mean conditional entropy used in this article is as follows, whose proof can be found in \cite{Roh,LQ}.

There is a very important property about measurable partitions in a Lebesgue space $(X,\mathcal{B},\mu)$ is that each partition $\alpha$ has a unique measurement system $\{\mu_{A}\}_{A\in \alpha}$ that satisfies the following two conditions:

\begin{enumerate}
\item $(A,\mathcal{B}|_{A},\mu_{A})$ be a Lebesgue space for $\mu_{\alpha}-$a.e. $A\in X/\alpha$;

\item for any $C\in \mathcal{B}, \mu_{A}(C\cap A)$ is measurable on $X/\alpha$ and

$$\mu(C)=\int_{X/\alpha}\mu_{A}(C\cap A)d\mu_{\alpha}.$$

\end{enumerate}

Such a system of measures $\{\mu_{A}\}_{A\in \alpha}$ is called a canonical system of conditional measures of $\mu$ associated with $\alpha$. The uniqueness mentioned above implies that $\mu_{A}=\mu'_{A}$ for $\mu_{\alpha}-$a.e. $A\in X/\alpha$ between any two systems $\{\mu_{A}\}_{A\in \alpha}$ and $\{\mu'_{A}\}_{A\in \alpha}$ that satisfy the two conditions listed above.

Let $\{\mu_{A}\}_{A\in \alpha}$ be a canonical system of conditional measures of $\mu$ associated with measurable partition $\alpha$. According to the knowledge of measure theory, for any $f\in L^{1}(X,\mathcal{B},\mu)$, the section $f_{A}$ defined as
$$
f_{A}(x)=f(x), \text{if} \  x\in A
$$
is integrable on  $(A,\mathcal{B}|_{A},\mu_{A})$ for $\mu_{\alpha}-$a.e. $A\in X/\alpha, \int_{A}f_{A}d\mu_{A}$ is measurable on $X/\alpha$ and
$$
\int_{X}f(x)d\mu=\int_{X/\alpha}(\int_{A}f_{A}d\mu_{A})d\mu_{\alpha}.
$$

\begin{definition}
Let $\alpha$ be a measurable partition of $X$ and let $A_{1},A_{2},\cdot\cdot\cdot$ be the elements of $\alpha$ of positive $\mu$ measure. We put

\begin{equation}\label{41}
H_{\mu}(\alpha)= \left\{
                                \begin{array}{rcl}
                                 -\Sigma_{l}\mu(A_{l})\log\mu(A_{l})&  & \text{if}\ \mu(X\backslash \bigcup_{l}A_{l})=0,\\
                                    +\infty   &  &\text{if}\ \mu(X\backslash \bigcup_{l}A_{l})>0.
                                \end{array}
                               \right.
\end{equation}

The above sum can be finite and infinite. $H_{\mu}(\alpha)$ is called the entropy of $\alpha$.
\end{definition}

\begin{definition}
Let $\alpha$ and $\beta$ are two measurable partitions of $X$, then almost every partition $\alpha_{B}$, defined as the restriction $\alpha|_{B}$ of $\alpha$ to $B$, $B\in X/\beta$, has a well-defined entropy $H_{\mu_{B}}(\alpha_{B})$. There is a non-negative measurable function on the factor-space $X/\beta$, called the conditional entropy of $\alpha$ with respect to $\beta$. Put

\begin{equation}\label{42}
H_{\mu}(\alpha|\beta)=\int_{X/\beta}H_{\mu_{B}}(\alpha_{B})d\mu_{\beta}.
\end{equation}

This integral can be finite or infinite. We call it the mean conditional entropy of  $\alpha$ with respect to $\beta$. Obviously, when $\beta$ is the trivial partition whose single element is $X$, $H_{\mu}(\alpha|\beta)$ coincides with the entropy $H_{\mu}(\alpha)$.
\end{definition}

\begin{prop}\label{22}
Let $G\curvearrowright^{T} (X, \mu)$ be a p.m.p. action on a Lebesgue space $(X,\mathcal{B},\mu)$, $\gamma$ be a measurable paitition, then
\begin{enumerate}
\item For any measurable partitions $\alpha,\beta$ of $X$, $H_{\mu}(\alpha\vee\beta|\gamma)\leq H_{\mu}(\alpha|\gamma)+H_{\mu}(\beta|\gamma)$;

\item For any measurable partitions $\alpha$ of $X$, $H_{\mu}(T_{g}\alpha|T_{g}\gamma)=H_{\mu}(\alpha|\gamma)$ for each $g\in G$;

\item For any measurable partitions $\alpha,\beta$ and $\gamma\leq\beta$ of $X$, implies $H_{\mu}(\alpha|\beta)\geq H_{\mu}(\alpha|\gamma)$ and $H_{\mu}(\gamma|\alpha)\leq H_{\mu}(\beta|\alpha)$;

\item Let $f:X\rightarrow X$ be a p.m.p. transformation, then $H_{\mu}(\alpha|\gamma)=H_{\mu}(f\alpha|f\gamma)$;

\item For any measurable partitions $\alpha,\beta,\beta$ of $X$, $H_{\mu}(\alpha\vee\beta|\beta)=H_{\mu}(\beta|\beta)+H_{\mu}(\alpha|\beta\vee\beta)$;

\item  For any measurable partitions $\eta_{n}$($n\in \mathbb{N}$), $\xi$ of $X$, if $\eta_{1}\leq\eta_{2}\leq\cdot\cdot\cdot$ and $\bigvee^{\infty}_{n=1}\eta_{n}=\eta$ and $H_{\mu}(\xi|\eta_{1})<+\infty$, then $H_{\mu}(\xi|\eta_{n})\rightarrow H_{\mu}(\xi|\eta)$.
\end{enumerate}
\end{prop}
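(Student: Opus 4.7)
All six items are classical Rokhlin-type identities and inequalities, and nothing in (1)--(5) uses the amenable structure of $G$; the group only enters through the fact that each $T_{g}$ is p.m.p., so (2) is a direct instance of (4). The plan is to lift each statement to a fiberwise statement about unconditional entropies on the fibers of a canonical system of conditional measures, and then integrate using
\[
H_{\mu}(\alpha|\beta)=\int_{X/\beta}H_{\mu_{B}}(\alpha_{B})\,d\mu_{\beta}
\]
recalled in Section 3.

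\textbf{Items (1)--(5).} For (1), observe that $(\alpha\vee\beta)_{B}=\alpha_{B}\vee\beta_{B}$ on each fiber $B\in X/\gamma$; apply the unconditional subadditivity $H_{\mu_{B}}(\alpha_{B}\vee\beta_{B})\leq H_{\mu_{B}}(\alpha_{B})+H_{\mu_{B}}(\beta_{B})$ and integrate against $\mu_{\gamma}$. For (4), since $f$ is p.m.p., transporting the canonical conditional system of $\gamma$ through $f$ yields a canonical conditional system for $f\gamma$, with the induced fiber measures matching by measure-preservation; the fiberwise entropies therefore agree, and integration preserves the equality. Item (2) is the specialization $f=T_{g}$ of (4). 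For (3), the hypothesis $\gamma\leq\beta$ gives an iterated disintegration: each $C\in X/\gamma$ is a $\mu_{\gamma}$-union of fibers $B\in X/\beta$ with $(\mu_{C})_{B}=\mu_{B}$; the monotonicity of entropy under coarsening or refinement of the relevant argument, applied fiberwise and then integrated, yields the stated inequalities. For (5), combine the same iterated disintegration with the unconditional chain rule $H_{\mu_{C}}(\alpha_{C}\vee\beta_{C})=H_{\mu_{C}}(\beta_{C})+H_{\mu_{C}}(\alpha_{C}|\beta_{C})$ on each $C\in X/\gamma$ and integrate.

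\textbf{Item (6) — the main obstacle.} This is the analytically delicate step. The increasing sequence $\eta_{n}\uparrow\eta$ of partitions corresponds to an increasing sequence of sub-$\sigma$-algebras $\overline{\mathcal{B}}(\eta_{n})\uparrow\overline{\mathcal{B}}(\eta)$. The plan is to express the conditional entropies through the information function $I_{\mu}(\xi|\eta_{n})$ and apply the increasing martingale convergence theorem to obtain a.e. convergence $I_{\mu}(\xi|\eta_{n})\to I_{\mu}(\xi|\eta)$. The hypothesis $H_{\mu}(\xi|\eta_{1})<+\infty$ is precisely what provides an integrable dominant: a classical estimate of Chung furnishes an $L^{1}$ bound on $\sup_{n}I_{\mu}(\xi|\eta_{n})$ in terms of $I_{\mu}(\xi|\eta_{1})$ plus a universal additive constant. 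Dominated convergence then legitimizes passing the limit under the integral in $H_{\mu}(\xi|\eta_{n})=\int I_{\mu}(\xi|\eta_{n})\,d\mu$. The main technical effort lies in translating the martingale structure of $I_{\mu}(\xi|\eta_{n})$ into the Rokhlin disintegration language used in Section 3, since this is precisely the place where a purely fiberwise argument no longer suffices and one must exploit the compatibility of the canonical systems $\{\mu_{B}\}_{B\in\eta_{n}}$ as $n\to\infty$.
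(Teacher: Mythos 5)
The paper does not actually prove Proposition 3.3: it states that the proofs ``can be found in \cite{Roh,LQ},'' so the only fair comparison is with the classical arguments in those references --- and your sketch is precisely a reconstruction of them. Items (1)--(5) via the disintegration formula $H_{\mu}(\alpha|\beta)=\int_{X/\beta}H_{\mu_{B}}(\alpha_{B})\,d\mu_{\beta}$, the uniqueness of canonical conditional systems for (2) and (4), the iterated (tower) disintegration $(\mu_{C})_{B}=\mu_{B}$ for (3) and (5), and the martingale/information-function argument for (6) are exactly Rokhlin's route, so in substance you are following the same path the paper delegates to its sources. Two caveats deserve attention. First, the printed first inequality in item (3) is reversed: with the paper's convention ($\gamma\leq\beta$ meaning $\beta$ refines $\gamma$), the classical fact --- and the one your fiberwise argument actually delivers, via $H_{\mu_{C}}(\alpha_{C}|\beta_{C})\leq H_{\mu_{C}}(\alpha_{C})$ integrated over $X/\gamma$ --- is $H_{\mu}(\alpha|\beta)\leq H_{\mu}(\alpha|\gamma)$; this corrected direction is also the one the paper uses later (in the strong subadditivity step of Theorem 4.3 and in the proof of Theorem 4.6), so you should state explicitly that you are proving the corrected inequality rather than the printed one, which fails already for $\gamma$ trivial and $\beta=\alpha$ nontrivial.

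Second, in item (6) you invoke ``a classical estimate of Chung'' to bound $\sup_{n}I_{\mu}(\xi|\eta_{n})$ in $L^{1}$ by $H_{\mu}(\xi|\eta_{1})$ plus a constant, but the Chung--Neveu lemma as classically stated bounds $\int\sup_{n}I_{\mu}(\xi|\mathcal{F}_{n})\,d\mu$ by $H_{\mu}(\xi)+C$ and \emph{requires} $H_{\mu}(\xi)<\infty$, which is not among your hypotheses ($\xi$ may even fail to be countable while $H_{\mu}(\xi|\eta_{1})<\infty$). The relativized bound you need is true, but it is not a quotation --- it is obtained by exactly the fiberwise mechanism you use elsewhere: for $\mu_{\eta_{1}}$-a.e.\ $B\in X/\eta_{1}$ the hypothesis forces $\xi_{B}$ to be countable with $H_{\mu_{B}}(\xi_{B})<\infty$, the unconditional Chung lemma applied on $(B,\mu_{B})$ to the filtration $((\eta_{n})_{B})_{n}$ gives $\int_{B}\sup_{n}I\,d\mu_{B}\leq H_{\mu_{B}}(\xi_{B})+C$, and integrating over $X/\eta_{1}$ yields $\int_{X}\sup_{n}I_{\mu}(\xi|\eta_{n})\,d\mu\leq H_{\mu}(\xi|\eta_{1})+C$. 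Spelling out this reduction (together with the observation that monotonicity already gives $\lim_{n}H_{\mu}(\xi|\eta_{n})\geq H_{\mu}(\xi|\eta)$, so only the dominated-convergence direction needs the maximal bound) would close the one real gap in your item (6); the rest of the proposal is sound as a sketch.
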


\section{Conditional entropy for amenable group actions}

For the conditional entropy of the system for amenable group actions, Yan \cite{KY} in 2015
extended the results of conditional entropy in \cite{DS,LW,Z}  to the infinite discrete countable amenable
group actions. Distinguishing from the method of Yan, in this paper, we introduce conditional entropy for amenable group actions by the method of factor space.

Let $G\curvearrowright^{T} (X, \mu)$ be a p.m.p. action on a Lebesgue space $(X,\mathcal{B},\mu)$ and $\mathcal{A}$ a sub-$\sigma$-algebra of $\mathcal{B}$ satisfying
\begin{equation}\label{1}
G\mathcal{A}\subseteq \mathcal{A},
\end{equation}
where $G \mathcal{A}=\{sA:s\in G,A\in\mathcal{A}\}$ and $sA=\{sx:x\in X\}$.
Then there exists a unique (mod 0) measurable partition $\mathcal{C}$ such that $\overline{\mathcal{B}}(\mathcal{C})$ is the $\sigma-$algebra $\overline{\mathcal{A}}$ consisting of classes of sets in $\mathcal{A}$, and from (\ref{1}) it follows that
\begin{equation}\label{2}
G\mathcal{C}\leq\mathcal{C},
\end{equation}

We denote
$$
\mathcal{Z}(\mathcal{C})=\{\alpha:\text{measurable partition}\ \alpha \text{ of } X \text{ satisfying } H_{\mu}(\alpha|\mathcal{C})<+\infty\}
$$

To prove Theorem \ref{3}, we now need Theorem \ref{51} and  Theorem \ref{52} which from \cite{KL1}.

\begin{theorem}\label{51}
Let $\varphi$ be a real-valued functions on the set of all nonempty finite subsets of $G$ satisfying

\begin{enumerate}
\item $\varphi(Fs)=\varphi(F)$ for all nonempty finite sets $F\subseteq G$ and $s\in G$, and;

\item $\varphi(F)\leq \frac{1}{k}\sum_{E\in \mathcal{K}}\varphi(E)$ for every nonempty finite set $F\subseteq G$ and $k-$ cover $\mathcal{K}$ of $F$ such that $\emptyset\neq \mathcal{K}$ and $E\subseteq F$ for all $E\notin \mathcal{K}$.
\end{enumerate}

Then $\varphi(F)/|F|$ converges to a limit as $F$ becomes more and more invariant and this limit is equal to
$$\inf\frac{\varphi(F)}{|F|},$$
where $F$ ranges over all nonempty finite subsets of $G$.
\end{theorem}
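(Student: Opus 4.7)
The plan is to adapt the Ornstein--Weiss argument for the convergence of $\varphi(F)/|F|$ along F{\o}lner sequences. Set $\ell=\inf_F \varphi(F)/|F|$; this is trivially a lower bound for the liminf, so the real content is to show that, given any $\epsilon>0$, every sufficiently invariant $F$ satisfies $\varphi(F)/|F|<\ell+\epsilon$.

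The first step is to build a finite collection of templates. Pick $A_1\subseteq G$ with $\varphi(A_1)/|A_1|<\ell+\epsilon$, and inductively choose $A_2,\ldots,A_N$, each containing $e$ (using (1) to right-translate without changing $\varphi$) and each with the same ratio bound, but with $A_{i+1}$ chosen so much more invariant than $A_1,\ldots,A_i$ that a greedy packing of $A_{i+1}$ by right-translates of the earlier templates already fills all but a $\delta$-fraction, where $\delta=\delta(\epsilon)$ is chosen small enough that a uniform multiplicity bound $k=k(\epsilon)$ can be maintained in the next step.

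The heart of the argument is the Ornstein--Weiss quasi-tiling construction. For any target $F$ sufficiently invariant (with respect to $A_N$), we produce finite index sets $C_i\subseteq G$ such that every right-translate $A_ic$ ($c\in C_i$, $i=1,\ldots,N$) is contained in $F$ and the multiset $\{A_ic:c\in C_i,\,i=1,\ldots,N\}$ is a $k$-cover of some $F_0\subseteq F$ with $|F_0|/|F|>1-\epsilon$. Augmenting with the singletons $\{f\}$ for $f\in F\setminus F_0$ gives a $k$-cover $\mathcal{K}$ of all of $F$ whose members sit inside $F$, so hypothesis (2) yields
\[
\varphi(F)\;\leq\;\frac{1}{k}\Bigl(\sum_{i=1}^N\sum_{c\in C_i}\varphi(A_ic)\;+\;k\sum_{f\in F\setminus F_0}\varphi(\{f\})\Bigr).
\]
Hypothesis (1) identifies $\varphi(A_ic)=\varphi(A_i)<(\ell+\epsilon)|A_i|$, while the $k$-cover property $\sum_i|C_i|\,|A_i|\leq k|F|$ bounds the main sum by $(\ell+\epsilon)|F|$. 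The residual contributes at most $\epsilon|F|\cdot\varphi(\{e\})$, so dividing by $|F|$ and letting $\epsilon\to 0$ forces $\limsup\varphi(F)/|F|\leq\ell$, matching the trivial liminf and identifying the limit with the infimum.

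The main obstacle I expect is the quasi-tiling step: engineering the scales $A_1\ll A_2\ll\cdots\ll A_N$ with compatible invariance and overlap parameters so that the resulting cover of $F$ has both density arbitrarily close to $1$ and multiplicity $k$ bounded uniformly in $F$. This requires choosing $\delta_i$ and the invariance of each $A_{i+1}$ from the outside in, and then running a greedy selection on $F$ starting from the largest template $A_N$ and working down. Once this combinatorial skeleton is set up, the passage through (1) and (2) to the target inequality is routine bookkeeping, and the bound $\varphi(\{e\})<\infty$ needed to absorb the residual follows from applying (2) to $F=\{e\}$ with $\mathcal{K}=\{\{e\}\}$.
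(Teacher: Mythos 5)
The first thing to note is that the paper contains no proof of Theorem \ref{51} at all: it is imported verbatim from \cite{KL1}, where it is the Ornstein--Weiss lemma, proved via the quasitiling machinery. Your proposal is therefore not competing with an in-paper argument but reconstructing the cited source's proof, and in outline it identifies the right skeleton (templates of increasing relative invariance, greedy selection from the largest scale down, then hypotheses (1) and (2) to convert the cover into the estimate $\varphi(F)\le(\ell+\epsilon)|F|+\epsilon\varphi(\{e\})|F|$, where $\ell=\inf_F\varphi(F)/|F|$). However, there is one genuine gap at the foundation. You ``inductively choose'' templates $A_1,\dots,A_N$ that simultaneously satisfy $\varphi(A_i)/|A_i|<\ell+\epsilon$ and are arbitrarily invariant relative to the earlier templates. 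Nothing in the hypotheses guarantees such sets exist: a priori the infimum $\ell$ might be approached only along sets with poor invariance, and asserting the contrary is circular, since ``near-infimal ratio along all sufficiently invariant sets'' is essentially the conclusion of the theorem. Closing this requires a separate lemma, and it is precisely here that hypothesis (2) must be applied with covers whose members are \emph{not} contained in the covered set: fix $B$ with $\varphi(B)/|B|<\ell+\epsilon/2$, take $D$ extremely invariant, and apply (2) with $k=|B|$ and $\mathcal{K}=\{Bc:c\in D\}$ to the set $A=\bigcap_{b\in B}bD$ of points covered with full multiplicity; then $\varphi(A)\le\frac{|D|}{|B|}\varphi(B)$ while $|A|\ge(1-\epsilon')|D|$ and $A$ inherits any prescribed invariance from $D$. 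This lemma (present in the Kerr--Li treatment) is what legitimizes your inductive choice; without it the quasitiling step has nothing to tile with.

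A secondary error: you call $\sum_i|C_i|\,|A_i|\le k|F|$ ``the $k$-cover property,'' but a $k$-cover gives a mass \emph{lower} bound ($\ge k|F_0|$), not an upper bound. The upper bound is an evenness/multiplicity constraint that the construction must deliver separately; the cleanest route is to use the $\epsilon$-disjointness of the Ornstein--Weiss quasitiling to obtain a $1$-cover of $F_0$ with total mass at most $(1-\epsilon)^{-1}|F|$ and take $k=1$ in the final count --- the general-$k$ flexibility in (2) is what powers the template lemma above, not the closing estimate. Two smaller points: your remark that $\varphi(\{e\})<\infty$ ``follows from applying (2) to $F=\{e\}$ with $\mathcal{K}=\{\{e\}\}$'' is vacuous, since that application yields only the tautology $\varphi(\{e\})\le\varphi(\{e\})$ (finiteness is immediate because $\varphi$ is real-valued by hypothesis); and since $\varphi$ is merely real-valued, the multiplications in your main estimate silently assume $\ell+\epsilon\ge 0$ and $\varphi(\{e\})\ge 0$, so the sign bookkeeping (including the possibility $\ell=-\infty$) deserves an explicit sentence.
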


\begin{theorem}\label{52}
Let $\varphi$ be a $[0,\infty)$- valued function on the set of all finite subsets of $G$ such that for all finite sets $E,F\subseteq G$ one has

\begin{enumerate}
\item $\varphi(E)\leq\varphi(F)$ wherever $E\subseteq F$, and;

\item $\varphi(E\cup F)\leq \varphi(E)+\varphi(F)-\varphi(E\cap F).$
\end{enumerate}

Then
$$\varphi(F)\leq \frac{1}{k}\sum_{E\in \mathcal{K}}\varphi(E)$$
for every nonempty finite set $F\subseteq G$ and $k-$ cover $\mathcal{K}$ of $F$.

\end{theorem}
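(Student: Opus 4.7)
The plan is to reduce to the case where $\mathcal{K}$ is a chain under inclusion by the standard \emph{uncrossing} argument, and then finish by monotonicity. Given an arbitrary $k$-cover $\mathcal{K}$ of $F$, if there exist $E,E' \in \mathcal{K}$ with $E \not\subseteq E'$ and $E' \not\subseteq E$, I form a new multiset family $\mathcal{K}'$ by removing the pair $\{E,E'\}$ and inserting the pair $\{E \cup E',\; E \cap E'\}$. For every $x$, the number of members of $\mathcal{K}$ containing $x$ equals the number of members of $\mathcal{K}'$ containing $x$: indeed, the membership pattern among $\{E,E'\}$ and among $\{E\cup E', E\cap E'\}$ has the same total count in all four cases $x \in E \cap E'$, $x \in E \setminus E'$, $x \in E' \setminus E$, $x \notin E \cup E'$. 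Hence $\mathcal{K}'$ is still a $k$-cover of $F$, and hypothesis (2) gives $\varphi(E \cup E') + \varphi(E \cap E') \leq \varphi(E) + \varphi(E')$, so the total $\varphi$-sum does not increase.

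Next I verify termination. The right monovariant is $\Phi(\mathcal{K}) := \sum_{A \in \mathcal{K}} |A|^2$: with $a = |E \setminus E'|$, $b = |E' \setminus E|$, $c = |E \cap E'|$, a direct computation gives
$$\Phi(\mathcal{K}') - \Phi(\mathcal{K}) \;=\; \bigl((a+b+c)^2 + c^2\bigr) - \bigl((a+c)^2 + (b+c)^2\bigr) \;=\; 2ab,$$
which is strictly positive whenever $E$ and $E'$ are incomparable. Since every member of every iterate is contained in the fixed finite set $\bigcup \mathcal{K}$ and the cardinality $|\mathcal{K}|$ does not change, $\Phi$ is bounded above and integer-valued. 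Thus after finitely many uncrossings we arrive at a chain $\mathcal{K}^{\ast} = \{E^{\ast}_1 \subseteq E^{\ast}_2 \subseteq \cdots \subseteq E^{\ast}_n\}$ that is still a $k$-cover of $F$ and satisfies $\sum_{A \in \mathcal{K}^{\ast}} \varphi(A) \leq \sum_{A \in \mathcal{K}} \varphi(A)$.

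To close, I use that for the chain $\mathcal{K}^\ast$ each $x \in F$ lies in at least $k$ members, and by nesting those must be the top $k$, whence $F \subseteq E^{\ast}_{n-k+1}$. Monotonicity (hypothesis (1)) then yields $\varphi(F) \leq \varphi(E^{\ast}_j)$ for every $j \geq n-k+1$. Summing these $k$ inequalities, adding the remaining nonnegative terms $\varphi(E^{\ast}_1), \dots, \varphi(E^{\ast}_{n-k})$, and chaining with the earlier reduction gives
$$k\,\varphi(F) \;\leq\; \sum_{j=n-k+1}^{n} \varphi(E^{\ast}_j) \;\leq\; \sum_{A \in \mathcal{K}^{\ast}} \varphi(A) \;\leq\; \sum_{A \in \mathcal{K}} \varphi(A),$$
which is the claim after dividing by $k$.

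The main obstacle I expect is the termination step: naive monovariants such as the number of incomparable pairs or $\sum_A |A|$ can stay constant under uncrossing, so one needs a strictly convex quantity like $\sum |A|^2$ and must carefully check it increases at every step while remaining bounded (using that uncrossing never enlarges $\bigcup \mathcal{K}$ and preserves $|\mathcal{K}|$). Once this bookkeeping is settled, both the cover-preservation check and the final chain argument are essentially formal manipulations of the two hypotheses.
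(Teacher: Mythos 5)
Your proof is correct, but note that the paper itself contains no proof of Theorem \ref{52}: the statement is imported from Kerr--Li \cite{KL1}, so the only comparison available is with the argument in that source. Your uncrossing scheme is sound in all its parts: the four-case check that replacing an incomparable pair $\{E,E'\}$ by $\{E\cup E',E\cap E'\}$ preserves every covering multiplicity (hence the $k$-cover property), the use of hypothesis (2) in the rearranged form $\varphi(E\cup E')+\varphi(E\cap E')\leq \varphi(E)+\varphi(E')$ to keep the $\varphi$-sum from increasing, and --- the genuinely delicate point, which you identify and settle correctly --- termination: with $a=|E\setminus E'|$, $b=|E'\setminus E|$, $c=|E\cap E'|$, the quantity $\sum_{A\in\mathcal{K}}|A|^{2}$ increases by exactly $2ab>0$ at each uncrossing while staying bounded by $|\mathcal{K}|\cdot|\bigcup\mathcal{K}|^{2}$, since uncrossing fixes $|\mathcal{K}|$ and never enlarges $\bigcup\mathcal{K}$; you are also right that $\sum_{A}|A|$ is conserved, so a strictly convex monovariant is really needed. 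The endgame is fine as well: in a chain, any $x\in F$ covered at least $k$ times must lie in the top $k$ sets, so $F\subseteq E^{*}_{n-k+1}$, and nonnegativity of $\varphi$ lets you discard the lower terms; the only implicit point is $n\geq k$, which is immediate from $F\neq\emptyset$. For comparison, the proof in \cite{KL1} reaches the same terminal configuration noniteratively: it passes directly to the nested level sets $B_{j}$ of points lying in at least $j$ members of $\mathcal{K}$, proves $\sum_{j}\varphi(B_{j})\leq\sum_{E\in\mathcal{K}}\varphi(E)$ by induction on the number of members --- one application of strong subadditivity per member, via $B_{j}=B'_{j}\cup(B'_{j-1}\cap E_{n})$ and $B'_{j}\cap(B'_{j-1}\cap E_{n})=B'_{j}\cap E_{n}$, so the sum telescopes --- and then uses $B_{j}\supseteq F$ for $j\leq k$. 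That route buys a shorter write-up with no termination bookkeeping (the level sets are exactly the chain your algorithm converges to), while your version makes the combinatorial mechanism explicit and is more transparent about where monotonicity, strong subadditivity, and nonnegativity are each invoked.
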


\begin{theorem}\label{3}
Let $G\curvearrowright^{T} (X, \mu)$ be a p.m.p. action on a Lebesgue space $(X,\mathcal{B},\mu)$, $\mathcal{A},\mathcal{C}$ be as given above, $F$ be a nonempty finite subset of $G$. Then for any $\alpha\in \mathcal{Z}(\mathcal{C})$
$$
\frac{1}{|F|}H_{\mu}(\alpha^{F}|\mathcal{C})
$$
converges to a limit as $F$ becomes more and more invariant and this limit is equal to
$$
\inf_{F}\frac{1}{|F|}H_{\mu}(\alpha^{F}|\mathcal{C})
$$
This limit is called the $\mathcal{A}$ conditional entropy of action $G\curvearrowright (X, \mu)$ with respect to $\alpha$, We define $h^{\mathcal{A}}_{\mu}(T,\alpha)$ to be the above limit.

\end{theorem}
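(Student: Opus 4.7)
The plan is to apply Theorem \ref{51} to the set function
$$\varphi(F) := H_{\mu}(\alpha^{F}\mid\mathcal{C}),\qquad \emptyset\neq F\subseteq G,\ |F|<\infty.$$
Finiteness of $\varphi$ follows from subadditivity (Proposition \ref{22}(1)) combined with the right-invariance established in step (a) below and the hypothesis $\alpha\in\mathcal{Z}(\mathcal{C})$: they give $\varphi(F)\leq |F|H_{\mu}(\alpha\mid\mathcal{C})<+\infty$, while $\varphi\geq 0$ is immediate. I then verify in turn the two hypotheses of Theorem \ref{51}.

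\textbf{(a) Right invariance.} From $\alpha^{Fs}=\bigvee_{f\in F}(fs)^{-1}\alpha = T_{s^{-1}}\alpha^{F}$ and Proposition \ref{22}(2) one obtains
$$\varphi(Fs)=H_{\mu}(T_{s^{-1}}\alpha^{F}\mid\mathcal{C})\quad\text{and}\quad H_{\mu}(T_{s^{-1}}\alpha^{F}\mid T_{s^{-1}}\mathcal{C})=\varphi(F).$$
Equality of these two quantities therefore reduces to showing $T_{s^{-1}}\mathcal{C}=\mathcal{C}$ mod $0$. Although the hypothesis reads only $G\mathcal{A}\subseteq\mathcal{A}$, the fact that $G$ is a group forces $s^{-1}\mathcal{A}\subseteq\mathcal{A}$ as well, so $\mathcal{A}=s(s^{-1}\mathcal{A})\subseteq s\mathcal{A}\subseteq\mathcal{A}$, i.e.\ $s\mathcal{A}=\mathcal{A}$ for every $s\in G$. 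Translating this equality through the one-to-one correspondence between sub-$\sigma$-algebras of $\overline{\mathcal{B}}$ and mod-$0$ classes of measurable partitions yields $T_{s}\mathcal{C}=\mathcal{C}$ mod $0$, as required.

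\textbf{(b) $k$-cover inequality.} I derive this from Theorem \ref{52} after checking monotonicity and submodularity. Monotonicity: if $E\subseteq F$ then $\alpha^{E}\leq\alpha^{F}$, so Proposition \ref{22}(3) gives $\varphi(E)\leq\varphi(F)$. Submodularity: using $\alpha^{E\cup F}=\alpha^{E}\vee\alpha^{F}$, Proposition \ref{22}(5) applied with conditioning $\mathcal{C}$ gives
$$\varphi(E\cup F)=H_{\mu}(\alpha^{E}\mid\mathcal{C})+H_{\mu}(\alpha^{F}\mid\alpha^{E}\vee\mathcal{C}).$$
Since $E\cap F\subseteq E$ implies $\alpha^{E\cap F}\vee\mathcal{C}\leq\alpha^{E}\vee\mathcal{C}$, Proposition \ref{22}(3) yields $H_{\mu}(\alpha^{F}\mid\alpha^{E}\vee\mathcal{C})\leq H_{\mu}(\alpha^{F}\mid\alpha^{E\cap F}\vee\mathcal{C})$. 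A second application of Proposition \ref{22}(5), combined with $\alpha^{E\cap F}\leq\alpha^{F}$ (so that $\alpha^{F}\vee\alpha^{E\cap F}=\alpha^{F}$), identifies the latter quantity with $\varphi(F)-\varphi(E\cap F)$. Hence $\varphi(E\cup F)\leq\varphi(E)+\varphi(F)-\varphi(E\cap F)$, and Theorem \ref{52} supplies hypothesis (2) of Theorem \ref{51}.

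Theorem \ref{51} now delivers both the existence of $\lim_{F}\varphi(F)/|F|$ as $F$ becomes more and more invariant and its coincidence with $\inf_{F}\varphi(F)/|F|$, which is the assertion of Theorem \ref{3}. The most delicate point in the argument is step (a): one has to upgrade the one-sided hypothesis $G\mathcal{A}\subseteq\mathcal{A}$ to genuine mod-$0$ $G$-invariance of the partition $\mathcal{C}$, since without that upgrade Proposition \ref{22}(2) would only compare $\varphi(Fs)$ and $\varphi(F)$ up to the inequality provided by Proposition \ref{22}(3), which is not enough to invoke the Ornstein--Weiss-type limit in Theorem \ref{51}.
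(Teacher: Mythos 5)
Your proposal is correct and follows essentially the same route as the paper: both apply Theorem \ref{51} through Theorem \ref{52} to $\varphi(F)=H_{\mu}(\alpha^{F}|\mathcal{C})$, with strong subadditivity obtained from the chain rule (Proposition \ref{22}(5)) and monotonicity under refinement of the conditioning partition (Proposition \ref{22}(3)); your computation via $\alpha^{E\cup F}=\alpha^{E}\vee\alpha^{F}$ is the same estimate the paper writes using $\alpha^{F\setminus E}$. Your step (a), upgrading $G\mathcal{A}\subseteq\mathcal{A}$ to $s\mathcal{A}=\mathcal{A}$ for all $s\in G$ so that $T_{s}\mathcal{C}=\mathcal{C}$ mod $0$ and hence $\varphi(Fs)=\varphi(F)$, correctly makes explicit an invariance verification that the paper's proof leaves implicit.
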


\begin{proof}
Let
$$f(F)=\frac{1}{|F|}H_{\mu}(\alpha^{F}|\mathcal{C}).$$

By Proposition \ref{22}, for each finite partition $\alpha$ on $X$, the function $F\longrightarrow H_{\mu}(\alpha^{F}|\mathcal{C})$ defined on the collection of finite subsets of $G$ satisfies two conditions in Theorem \ref{51}  through Theorem \ref{52}, with the strong subadditivity in Theorem \ref{52} following from the observation that
\begin{equation*}
\begin{split}
H_{\mu}(\alpha^{E\cup F}|\mathcal{C})-H_{\mu}(\alpha^{E}|\mathcal{C})&=H_{\mu}(\alpha^{F\setminus E}|\alpha^{E}\vee\mathcal{C})\\
&\leq H_{\mu}(\alpha^{F\setminus E}|\alpha^{F\cap E}\vee\mathcal{C})\\
&=H_{\mu}(\alpha^{F}|\mathcal{C})-H_{\mu}(\alpha^{E\cap F}|\mathcal{C}),
\end{split}
\end{equation*}

The property of the mean conditional entropy in \cite{Roh} was used in the above proof process. In particular, we can express these quantities by taking the limit or infimum over any F{\o}lner sequence instead.
\end{proof}

\begin{definition}
Let $G\curvearrowright^{T} (X, \mu)$ be a p.m.p. action on a Lebesgue space $(X,\mathcal{B},\mu)$ and $\mathcal{A}$ be as given before. Then
$$
h^{\mathcal{A}}_{\mu}(T)=\sup_{\alpha\in\mathcal{Z}(\mathcal{C})}h^{\mathcal{A}}_{\mu}(T,\alpha)
$$
is called the $\mathcal{A}$-conditional entropy of the action $G\curvearrowright^{T} (X, \mu)$.
\end{definition}

\begin{theorem}\label{7}
Let $G\curvearrowright^{T}(X, \mu)$, $\mathcal{A}$ and $\mathcal{C}$ be as given before. Then for any $\alpha,\beta\in\mathcal{Z}(\mathcal{C})$, the following hold true
\begin{enumerate}
\item $h^{\mathcal{A}}_{\mu}(T,\alpha)\leq H_{\mu}(\alpha|\mathcal{C})$;

\item $h^{\mathcal{A}}_{\mu}(T,\alpha\vee\beta)\leq h^{\mathcal{A}}_{\mu}(T,\alpha)+h^{\mathcal{A}}_{\mu}(T,\beta)$;

\item $\alpha\leq\beta$, implies $h^{\mathcal{A}}_{\mu}(T,\alpha)\leq h^{\mathcal{A}}_{\mu}(T,\beta)$;

\item $h^{\mathcal{A}}_{\mu}(T,\alpha)\leq h^{\mathcal{A}}_{\mu}(T,\beta)+H_{\mu}(\alpha|\beta\vee\mathcal{C})$.

\end{enumerate}

\end{theorem}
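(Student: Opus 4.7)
The plan is to derive all four inequalities from the limit formula in Theorem \ref{3} combined with the properties of Proposition \ref{22} and the $G$-invariance relation (\ref{2}), applied to the partitions $\alpha^F=\bigvee_{g\in F}g^{-1}\alpha$ for an arbitrary nonempty finite $F\subseteq G$.

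For (1), I would estimate $H_\mu(\alpha^F|\mathcal{C})$ at finite stage: by the subadditivity of Proposition \ref{22}(1),
\[
H_\mu(\alpha^F|\mathcal{C})\leq\sum_{g\in F}H_\mu(g^{-1}\alpha|\mathcal{C}),
\]
and for each $g$ the relation $g^{-1}\mathcal{C}\leq\mathcal{C}$ coming from (\ref{2}), combined with the conditioning monotonicity of Proposition \ref{22}(3) and the transformation invariance of Proposition \ref{22}(4), yields $H_\mu(g^{-1}\alpha|\mathcal{C})\leq H_\mu(g^{-1}\alpha|g^{-1}\mathcal{C})=H_\mu(\alpha|\mathcal{C})$; dividing by $|F|$ and passing to the infimum gives (1). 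For (2), the identity $(\alpha\vee\beta)^F=\alpha^F\vee\beta^F$ and the subadditivity of $H_\mu(\cdot|\mathcal{C})$ give $H_\mu((\alpha\vee\beta)^F|\mathcal{C})\leq H_\mu(\alpha^F|\mathcal{C})+H_\mu(\beta^F|\mathcal{C})$, and (2) follows upon dividing by $|F|$ and taking the limit along a F{\o}lner sequence. For (3), $\alpha\leq\beta$ implies $g^{-1}\alpha\leq g^{-1}\beta$ for every $g$ and hence $\alpha^F\leq\beta^F$; Proposition \ref{22}(3) (applied in the first slot) then gives $H_\mu(\alpha^F|\mathcal{C})\leq H_\mu(\beta^F|\mathcal{C})$, whence the inequality of rates.

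The delicate part is (4). Here the plan is to apply the chain rule of Proposition \ref{22}(5) to split
\[
H_\mu((\alpha\vee\beta)^F|\mathcal{C})=H_\mu(\beta^F|\mathcal{C})+H_\mu(\alpha^F|\beta^F\vee\mathcal{C}),
\]
and bound the second summand by $|F|\,H_\mu(\alpha|\beta\vee\mathcal{C})$. For the latter, subadditivity yields
\[
H_\mu(\alpha^F|\beta^F\vee\mathcal{C})\leq\sum_{g\in F}H_\mu(g^{-1}\alpha|\beta^F\vee\mathcal{C}),
\]
and since $g^{-1}\beta\leq\beta^F$ and $g^{-1}\mathcal{C}\leq\mathcal{C}$, the partition $\beta^F\vee\mathcal{C}$ refines $g^{-1}(\beta\vee\mathcal{C})=g^{-1}\beta\vee g^{-1}\mathcal{C}$; Proposition \ref{22}(3) and (4) then reduce each summand to $H_\mu(\alpha|\beta\vee\mathcal{C})$. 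Dividing by $|F|$, letting $F$ become more and more invariant, and finally combining with (3) applied to $\alpha\leq\alpha\vee\beta$ gives (4).

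The main obstacle I anticipate is the bookkeeping in (4): one must carefully verify the partition inequality $\beta^F\vee\mathcal{C}\geq g^{-1}(\beta\vee\mathcal{C})$ for each $g\in F$, which combines the obvious relation $g^{-1}\beta\leq\beta^F$ with the $G$-invariance (\ref{2}) of $\mathcal{C}$. Once this monotonicity step is set up correctly, the remaining estimates in (1)--(4) are direct consequences of Proposition \ref{22} together with the limit formula of Theorem \ref{3}.
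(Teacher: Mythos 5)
Your proposal is correct and follows essentially the same route as the paper's proof: subadditivity, the identity $(\alpha\vee\beta)^{F}=\alpha^{F}\vee\beta^{F}$, the chain rule from Proposition \ref{22}, the invariance relation (\ref{2}) giving $g^{-1}(\beta\vee\mathcal{C})\leq\beta^{F}\vee\mathcal{C}$, and the infimum/limit formula of Theorem \ref{3}. The only cosmetic difference is in (4), where you merge the paper's two successive conditioning steps (first replacing $\beta^{F}\vee\mathcal{C}$ by $s^{-1}\beta\vee\mathcal{C}$, then by $s^{-1}(\beta\vee\mathcal{C})$) into a single refinement inequality and invoke part (3) at the level of the limits rather than at the finite-$F$ stage.
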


\begin{proof}

Let $E,F$ be any nonempty finite subsets of $G$.

(1) From $\frac{1}{|F|}H_{\mu}(\alpha^{F}|\mathcal{C})\leq \frac{1}{|F|}\sum_{s\in F}H_{\mu}(s^{-1}\alpha|\mathcal{C})\leq\frac{1}{|F|}\sum_{s\in F}H_{\mu}(s^{-1}\alpha|s^{-1}\mathcal{C})=H_{\mu}(\alpha|\mathcal{C})$;

(2) Since $H_{\mu}((\alpha\vee\beta)^{F}|\mathcal{C})=H_{\mu}(\alpha^{F}\vee\beta^{F}|\mathcal{C})\leq H_{\mu}(\alpha^{F}|\mathcal{C})+H_{\mu}(\beta^{F}|\mathcal{C})$ then (2) holds obviously;

(3) $\alpha\leq\beta$, implies that $\alpha^{F}\leq\beta^{F}$ for all nonempty finite subset $F$ of $G$, then (3) follows from  theorem \ref{3};

(4) Since

$$
H_{\mu}(\alpha^{F}|\mathcal{C})\leq H_{\mu}(\alpha^{F}\vee\beta^{F}|\mathcal{C})=H_{\mu}(\beta^{F}|\mathcal{C})+H_{\mu}(\alpha^{F}|\beta^{F}\vee\mathcal{C})
$$
and
\begin{equation*}
\begin{split}
H_{\mu}(\alpha^{F}|\beta^{F}\vee\mathcal{C})&\leq\sum_{s\in F}H_{\mu}(s^{-1}\alpha|\beta^{F}\vee\mathcal{C})\\
&\leq\sum_{s\in F}H_{\mu}(s^{-1}\alpha|s^{-1}\beta\vee\mathcal{C})\\
&\leq\sum_{s\in F}H_{\mu}(s^{-1}\alpha|s^{-1}(\beta\vee\mathcal{C}))\\
&=|F|H_{\mu}(\alpha|\beta\vee\mathcal{C}),
\end{split}
\end{equation*}
this together with theorem \ref{3} yields (4).
\end{proof}

\begin{theorem}\label{4}
Let $G\curvearrowright^{T}(X, \mu)$, $\mathcal{A}$ be as given before. If $\alpha_{1}\leq\alpha_{2}\leq\cdot\cdot\cdot$ is an increase sequence of partitions in $\mathcal{Z}(\mathcal{C})$ such that $(\bigvee^{+\infty}_{n=1}\alpha_{n})\vee\mathcal{C}=\varepsilon$, where $\varepsilon$ denote a partition of $X$ into distinct points, Then
$$
h^{\mathcal{A}}_{\mu}(T,\alpha_{n})\longrightarrow h^{\mathcal{A}}_{\mu}(T), \text{as}\  n\longrightarrow +\infty.
$$
\end{theorem}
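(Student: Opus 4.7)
The plan is to establish the two inequalities $\lim_n h^{\mathcal{A}}_\mu(T,\alpha_n) \leq h^{\mathcal{A}}_\mu(T)$ and $h^{\mathcal{A}}_\mu(T) \leq \lim_n h^{\mathcal{A}}_\mu(T,\alpha_n)$ separately. The first is immediate: since $\alpha_n\leq\alpha_{n+1}$, Theorem \ref{7}(3) gives $h^{\mathcal{A}}_\mu(T,\alpha_n)\leq h^{\mathcal{A}}_\mu(T,\alpha_{n+1})$, so the sequence is non-decreasing with limit bounded above by the supremum $h^{\mathcal{A}}_\mu(T)$.

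For the reverse inequality I would fix an arbitrary $\beta \in \mathcal{Z}(\mathcal{C})$ and apply Theorem \ref{7}(4) to obtain
\[
h^{\mathcal{A}}_\mu(T,\beta) \leq h^{\mathcal{A}}_\mu(T,\alpha_n) + H_\mu(\beta | \alpha_n \vee \mathcal{C}),
\]
so the whole task reduces to showing $H_\mu(\beta | \alpha_n \vee \mathcal{C}) \to 0$ as $n\to\infty$. To this end set $\eta_n := \alpha_n \vee \mathcal{C}$; then $\eta_n$ is an increasing sequence of measurable partitions and $\bigvee_{n\geq 1}\eta_n = (\bigvee_n \alpha_n) \vee \mathcal{C} = \varepsilon$ by hypothesis. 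This puts us in position to invoke the continuity result Proposition \ref{22}(6), whose finiteness hypothesis $H_\mu(\beta | \eta_1) < +\infty$ follows from the chain rule and subadditivity in Proposition \ref{22}(1),(5): namely
\[
H_\mu(\alpha_1 | \mathcal{C}) + H_\mu(\beta | \alpha_1 \vee \mathcal{C}) = H_\mu(\alpha_1 \vee \beta | \mathcal{C}) \leq H_\mu(\alpha_1 | \mathcal{C}) + H_\mu(\beta | \mathcal{C}) < +\infty,
\]
using $\alpha_1,\beta \in \mathcal{Z}(\mathcal{C})$ at the last step. Proposition \ref{22}(6) then yields $H_\mu(\beta | \eta_n) \to H_\mu(\beta | \varepsilon)$, and $H_\mu(\beta|\varepsilon) = 0$ because the canonical conditional measures associated with the point partition are Dirac masses, so every fibrewise entropy $H_{\mu_B}(\beta_B)$ vanishes. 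Letting $n\to\infty$ in the displayed bound gives $h^{\mathcal{A}}_\mu(T,\beta) \leq \lim_n h^{\mathcal{A}}_\mu(T,\alpha_n)$, and taking the supremum over $\beta \in \mathcal{Z}(\mathcal{C})$ completes the reverse inequality.

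The step I expect to require the most care is the finiteness check $H_\mu(\beta | \alpha_1 \vee \mathcal{C}) < +\infty$, which is really what the membership $\alpha_1,\beta\in\mathcal{Z}(\mathcal{C})$ is doing for us in the hypothesis. Once that is in hand, the proof is a clean combination of Theorem \ref{7}(4) with Proposition \ref{22}(6); no F{\o}lner-sequence manipulations on $\alpha^{F}$ are needed, since all of the group-action structure has already been absorbed into the definition of $h^{\mathcal{A}}_\mu(T,\cdot)$ via Theorem \ref{3}.
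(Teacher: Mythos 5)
Your proposal is correct and takes essentially the same route as the paper: both arguments rest on Theorem \ref{7}(4) applied to an arbitrary $\beta\in\mathcal{Z}(\mathcal{C})$, the convergence $H_{\mu}(\beta|\alpha_{n}\vee\mathcal{C})\to 0$, and the monotonicity of $h^{\mathcal{A}}_{\mu}(T,\alpha_{n})$ from Theorem \ref{7}(3). If anything, you are more careful than the paper, which cites Rohlin for the convergence to zero without comment, whereas you explicitly verify the finiteness hypothesis $H_{\mu}(\beta|\alpha_{1}\vee\mathcal{C})<+\infty$ required to invoke Proposition \ref{22}(6) and note that $H_{\mu}(\beta|\varepsilon)=0$.
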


\begin{proof}
For any $\beta\in \mathcal{Z}(\mathcal{C})$ by (4) of theorem \ref{7}
$$
h^{\mathcal{A}}_{\mu}(T,\beta)\leq h^{\mathcal{A}}_{\mu}(T,\alpha_{n})+H_{\mu}(\beta|\alpha_{n}\vee\mathcal{C}).
$$

Since, according the property of mean conditional entropy in \cite{Roh}
$$
H_{\mu}(\beta|\alpha_{n}\vee\mathcal{C})\longrightarrow0, \text{as}\  n\longrightarrow+\infty,
$$
by (3) of theorem \ref{7} the sequence $h^{\mathcal{A}}_{\mu}(T,\alpha_{1}),h^{\mathcal{A}}_{\mu}(T,\alpha_{2}),\cdot\cdot\cdot$ is increasing, we have
$$
h^{\mathcal{A}}_{\mu}(T,\beta)\leq\lim_{ n\longrightarrow+\infty}h^{\mathcal{A}}_{\mu}(T,\alpha_{n}).
$$
Since $\beta$ is arbitrary in $\mathcal{Z}(\mathcal{C})$ we see that
$$
h^{\mathcal{A}}_{\mu}(T,\alpha_{n})\longrightarrow h^{\mathcal{A}}_{\mu}(T), \text{as}\  n\longrightarrow+\infty,
$$
Completing the proof.
\end{proof}

A measurable partitions $\alpha$ of $X$ is said to be fixed under an group $G$ action if every element $A$ of $\alpha$ satisfies $GA=A$.
\begin{theorem}\label{5}
Let $G\curvearrowright^{T}(X, \mu)$, $\mathcal{A}$, $\mathcal{C}$  be as given before. and assume that $G\mathcal{A}=\mathcal{A}$. If $\alpha\in \mathcal{Z}(\mathcal{C})$ and $\beta$ is a measurable partition fixed under the action $G\curvearrowright^{T}(X, \mu)$. Then
$$
h^{\mathcal{A}}_{\mu}(T,\alpha)=\int_{X/\beta}h^{\mathcal{A}_{B}}_{\mu_{B}}(T_{B},\alpha_{B})d\mu_{\beta}.
$$
where $\mathcal{A}_{B}=\mathcal{A}|_{B}$.
\end{theorem}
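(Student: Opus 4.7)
The plan is to reduce the statement to a fiberwise identity for the conditional entropy of $\alpha^{F}$ and then pass to the limit along a F{\o}lner sequence. Since $\beta$ is fixed under $G$, every $B\in X/\beta$ is a $G$-invariant set, so $T$ restricts to a p.m.p.\ action $T_{B}:G\curvearrowright(B,\mathcal{B}|_{B},\mu_{B})$. Because the hypothesis strengthens $G\mathcal{A}\subseteq\mathcal{A}$ to $G\mathcal{A}=\mathcal{A}$, the sub-$\sigma$-algebra $\mathcal{A}_{B}=\mathcal{A}|_{B}$ is $G$-invariant on the fiber, its associated measurable partition is $\mathcal{C}_{B}=\mathcal{C}|_{B}$, and the $G$-invariance of $B$ gives the clean identification $(\alpha^{F})|_{B}=(\alpha_{B})^{F}$.

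The core step is to prove the finite-$F$ disintegration
$$
H_{\mu}(\alpha^{F}|\mathcal{C})=\int_{X/\beta} H_{\mu_{B}}(\alpha_{B}^{F}|\mathcal{C}_{B})\,d\mu_{\beta}.
$$
I would derive this by combining the canonical system of conditional measures $\{\mu_{B}\}_{B\in\beta}$ from Section~3 with the disintegration of conditional entropy recalled there: one expresses $H_{\mu}(\alpha^{F}|\mathcal{C})$ as an integral of pointwise information against $\mu$, conditions first through the coarser partition $\beta$ using the $G$-invariance of each $B$, and identifies the inner integral as the conditional entropy computed inside the fiber $(B,\mathcal{B}|_{B},\mu_{B})$ with respect to $\mathcal{C}_{B}$. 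This is the only nontrivial measure-theoretic input, and it also yields that $\alpha_{B}\in\mathcal{Z}(\mathcal{C}_{B})$ for $\mu_{\beta}$-a.e.\ $B$.

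With the finite-$F$ identity in hand, I divide both sides by $|F|$ and let $F$ become more and more invariant. By Theorem \ref{3}, the left side converges to $h^{\mathcal{A}}_{\mu}(T,\alpha)$, while applying the same theorem fiberwise gives that the integrand converges pointwise to $h^{\mathcal{A}_{B}}_{\mu_{B}}(T_{B},\alpha_{B})$. To interchange limit and integral I invoke dominated convergence with majorant $H_{\mu_{B}}(\alpha_{B}|\mathcal{C}_{B})$; by the fiberwise version of Theorem \ref{7}(1) this dominates $\frac{1}{|F|}H_{\mu_{B}}(\alpha_{B}^{F}|\mathcal{C}_{B})$ for every $F$, and the disintegration applied to the partition $\alpha$ together with Proposition \ref{22}(3) bounds its integral by $H_{\mu}(\alpha|\mathcal{C}\vee\beta)\leq H_{\mu}(\alpha|\mathcal{C})<+\infty$.

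The main obstacle is the finite-$F$ disintegration formula, since the restriction-to-fiber interacts delicately with conditioning on $\mathcal{C}$; once that is secured, the limiting procedure is the routine dominated convergence argument coupled with Theorem \ref{3}, and the remaining work consists of the bookkeeping needed to check that restrictions of partitions and of sub-$\sigma$-algebras to the $G$-invariant atoms of $\beta$ behave as expected.
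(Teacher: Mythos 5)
Your overall route is the paper's own: the paper likewise disintegrates the conditional entropy over $X/\beta$ (via the information function $m(x,\alpha|\mathcal{C})=\mu_{\mathcal{C}(x)}(\alpha(x)\cap\mathcal{C}(x))$) and then passes to the F{\o}lner limit using Theorem \ref{3} on each fiber. In fact you are more careful than the source on one point: the paper interchanges $\lim_{n}$ and $\int_{X/\beta}$ with no justification, whereas your dominated convergence argument with majorant $H_{\mu_{B}}(\alpha_{B}|\mathcal{C}_{B})$ (dominating $\frac{1}{|F|}H_{\mu_{B}}(\alpha_{B}^{F}|\mathcal{C}_{B})$ by the inequality chain in the proof of Theorem \ref{7}(1), run fiberwise) genuinely fills that hole.

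There is, however, a real problem with your declared core step, the finite-$F$ identity $H_{\mu}(\alpha^{F}|\mathcal{C})=\int_{X/\beta}H_{\mu_{B}}(\alpha_{B}^{F}|\mathcal{C}_{B})\,d\mu_{\beta}$: as stated it is false unless $\beta\leq\mathcal{C}$. Restricting to an atom $B$ of $\beta$ implicitly conditions on $\beta$; for $\mu_{\beta}$-a.e.\ $B$ the canonical conditional measures of $\mu_{B}$ on the atoms of $\mathcal{C}_{B}$ are the conditional measures of $\mu$ on the atoms of $\mathcal{C}\vee\beta$, not of $\mathcal{C}$, so what the disintegration actually gives is $H_{\mu}(\alpha^{F}|\mathcal{C}\vee\beta)=\int_{X/\beta}H_{\mu_{B}}(\alpha_{B}^{F}|\mathcal{C}_{B})\,d\mu_{\beta}$. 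A concrete counterexample to your version: take $\mathcal{A}$ trivial and $\beta$ a two-set invariant partition; then your left side is $H_{\mu}(\alpha^{F})$ while the right side is $H_{\mu}(\alpha^{F}|\beta)$, and these differ by as much as $H_{\mu}(\beta)>0$. Your own majorant computation betrays the tension, since there you correctly obtain $H_{\mu}(\alpha|\mathcal{C}\vee\beta)$, not $H_{\mu}(\alpha|\mathcal{C})$, from the same disintegration. (To be fair, the paper's proof makes the identical silent identification when it replaces $\mu_{\mathcal{C}_{B}(x)}$ by $\mu_{\mathcal{C}(x)}$, so you have faithfully reproduced a gap in the source.) To close the gap you must either add the hypothesis $\beta\leq\mathcal{C}$, or show that the normalized defect $\frac{1}{|F_{n}|}\left(H_{\mu}(\alpha^{F_{n}}|\mathcal{C})-H_{\mu}(\alpha^{F_{n}}|\mathcal{C}\vee\beta)\right)$ tends to $0$. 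When $H_{\mu}(\beta)<+\infty$ this is immediate, since $H_{\mu}(\alpha^{F}|\mathcal{C})\leq H_{\mu}(\beta|\mathcal{C})+H_{\mu}(\alpha^{F}|\mathcal{C}\vee\beta)$ bounds the defect by the constant $H_{\mu}(\beta)$; but for a general measurable invariant $\beta$ (e.g.\ the partition underlying ergodic decomposition, which is the interesting case for Theorem \ref{31}) this vanishing is precisely the nontrivial content of the decomposition theorem and requires a separate argument, for instance approximating $\beta$ by finite invariant partitions and invoking Proposition \ref{22}(6).
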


\begin{proof}
Let $\{F_{n}\}$ be a F{\o}lner sequence of $G$. For any $x\in X$, define $m(x,\alpha|\mathcal{C})=\mu_{\mathcal{C}(x)}(\alpha(x)\cap\mathcal{C}(x) )$, then it is measurable function on $X$ and \cite{42} can be written in the form:
$$H_{\mu}\alpha|\mathcal{C}=-\int_{X}\log m(x,\alpha|\mathcal{C})d\mu.$$

Then
\begin{equation*}
\begin{split}
h^{\mathcal{A}_{B}}_{\mu_{B}}(T_{B},\alpha_{B})&=\lim_{n\longrightarrow+\infty}\frac{1}{|F_{n}|}H_{\mu_{B}}(\alpha^{F_{n}}_{B}|\mathcal{C}_{B})\\
&=\lim_{n\longrightarrow+\infty}\frac{1}{|F_{n}|}\left[-\int_{B}\log\mu_{\mathcal{C}_{B}(x)}\left(\alpha^{F_{n}}_{B}(x)\cap\mathcal{C}_{B}(x)\right)d\mu_{B}\right],
\end{split}
\end{equation*}
and so
\begin{equation*}
\begin{split}
\int_{X/\beta}h^{\mathcal{A}_{B}}_{\mu_{B}}(T_{B},\alpha_{B})d\mu_{\beta}&=\int_{X/\beta}\lim_{n\longrightarrow+\infty}\frac{1}{|F_{n}|}H_{\mu_{B}}(\alpha^{F_{n}}_{B}|\mathcal{C}_{B})d\mu_{\beta}\\
&=-\lim_{n\longrightarrow+\infty}\frac{1}{|F_{n}|}\int_{X}\log\mu_{\mathcal{C}(x)}\left(\alpha^{F_{n}}(x)\cap\mathcal{C}(x)\right)d\mu\\
&=\lim_{n\rightarrow\infty}\frac{H_{\mu}(\alpha^{F_{n}}|\mathcal{C})}{|F_{n}|}\\
&=h^{\mathcal{A}}_{\mu}(T,\alpha).
\end{split}
\end{equation*}

\end{proof}

\begin{theorem}\label{31}
Let $G\curvearrowright^{T}(X, \mu)$, $\mathcal{A}$, be as given before and assume that $G\mathcal{A}=\mathcal{A}$. If $\beta$ is a measurable partition fixed under the action $G\curvearrowright^{T}(X, \mu)$. Then
\begin{equation}\label{8}
h^{\mathcal{A}}_{\mu}(T)=\int_{X/\beta}h^{\mathcal{A}_{B}}_{\mu_{B}}(T_{B})d\mu_{\beta}.
\end{equation}
where $\mathcal{A}_{B}=\mathcal{A}|_{B}$.
\end{theorem}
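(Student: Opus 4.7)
The plan is to deduce Theorem \ref{31} from the partition-level version, Theorem \ref{5}, by proving two inequalities and then removing the dependence on the generating sequence of partitions via a monotone convergence argument.

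First I would establish the easy direction $h^{\mathcal{A}}_{\mu}(T)\leq \int_{X/\beta}h^{\mathcal{A}_{B}}_{\mu_{B}}(T_{B})d\mu_{\beta}$. For any $\alpha\in\mathcal{Z}(\mathcal{C})$, Theorem \ref{5} gives
$$h^{\mathcal{A}}_{\mu}(T,\alpha)=\int_{X/\beta}h^{\mathcal{A}_{B}}_{\mu_{B}}(T_{B},\alpha_{B})\,d\mu_{\beta}\leq \int_{X/\beta}h^{\mathcal{A}_{B}}_{\mu_{B}}(T_{B})\,d\mu_{\beta},$$
since $\alpha_{B}\in\mathcal{Z}(\mathcal{C}_{B})$ for $\mu_{\beta}$-a.e.\ $B$, whence taking the supremum over $\alpha\in\mathcal{Z}(\mathcal{C})$ yields the inequality. (The fact that each component $B$ is $G$-invariant, so that the restricted action $T_{B}$ makes sense, follows from $\beta$ being fixed under $G$.)

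For the reverse inequality I would invoke Theorem \ref{4} twice. Since $(X,\mathcal{B},\mu)$ is a Lebesgue space, pick an increasing sequence $\alpha_{1}\leq\alpha_{2}\leq\cdots$ of finite measurable partitions in $\mathcal{Z}(\mathcal{C})$ with $(\bigvee_{n}\alpha_{n})\vee\mathcal{C}=\varepsilon$. By Theorem \ref{4}, $h^{\mathcal{A}}_{\mu}(T,\alpha_{n})\uparrow h^{\mathcal{A}}_{\mu}(T)$. On the other hand, for $\mu_{\beta}$-a.e.\ $B$, the restrictions $(\alpha_{n})_{B}$ form an increasing sequence of partitions of $B$ whose join together with $\mathcal{C}_{B}$ is the partition of $B$ into points; hence the same Theorem \ref{4} applied to the p.m.p.\ action $G\curvearrowright^{T_{B}}(B,\mu_{B})$ gives
$$h^{\mathcal{A}_{B}}_{\mu_{B}}(T_{B},(\alpha_{n})_{B})\uparrow h^{\mathcal{A}_{B}}_{\mu_{B}}(T_{B}).$$
Applying Theorem \ref{5} to each $\alpha_{n}$ and then the monotone convergence theorem (the integrands are non-negative and non-decreasing in $n$ by part (3) of Theorem \ref{7}) gives
$$h^{\mathcal{A}}_{\mu}(T)=\lim_{n\to\infty}h^{\mathcal{A}}_{\mu}(T,\alpha_{n})=\lim_{n\to\infty}\int_{X/\beta}h^{\mathcal{A}_{B}}_{\mu_{B}}(T_{B},(\alpha_{n})_{B})\,d\mu_{\beta}=\int_{X/\beta}h^{\mathcal{A}_{B}}_{\mu_{B}}(T_{B})\,d\mu_{\beta},$$
completing the other inequality.

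The main obstacle I anticipate is the measurability/integrability bookkeeping for the fibrewise quantities: verifying that $B\mapsto h^{\mathcal{A}_{B}}_{\mu_{B}}(T_{B},(\alpha_{n})_{B})$ and $B\mapsto h^{\mathcal{A}_{B}}_{\mu_{B}}(T_{B})$ are measurable on $X/\beta$, and that the canonical conditional measure system for $\beta$ is compatible with the $G$-action (so that $T_{B}$ is genuinely p.m.p.\ on $(B,\mu_{B})$). Both are standard consequences of $\beta$ being $G$-invariant and of the canonical conditional measure system for a Lebesgue space, and Theorem \ref{5} already implicitly packages this information; the argument above simply promotes the partition-level statement of Theorem \ref{5} to the entropy-level statement through a generating sequence together with monotone convergence.
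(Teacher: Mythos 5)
Your proposal is correct and takes essentially the same route as the paper's own proof: choose an increasing generating sequence of finite partitions, apply Theorem \ref{5} to each, invoke Theorem \ref{4} both for the global action and fibrewise on $\mu_{\beta}$-a.e.\ $B$, and conclude with the monotone convergence theorem. The only difference is your separate ``easy direction'' inequality, which is redundant since your monotone convergence argument already yields the equality outright.
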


\begin{proof}
Let $\beta_{1}\leq\beta_{2}\leq\cdot\cdot\cdot$ is an increase sequence of finite measurable partitions of $X$ such that $\beta_{n}\longrightarrow \varepsilon$. Then $\beta_{n}\in \mathcal{Z}(\mathcal{C})$. According to the Theorem \ref{5}, for any $n\in \mathbb{N}$ we have
\begin{equation}\label{9}
h^{\mathcal{A}}_{\mu}(T,\beta_{n})=\int_{X/\beta}h^{\mathcal{A}_{B}}_{\mu_{B}}(T_{B},(\beta_{n})_{B})d\mu_{\beta}.
\end{equation}
From $\beta_{n}\longrightarrow \varepsilon$, then $(\bigvee_{n=1}^{\infty}\beta_{n})\vee \mathcal{C}=\varepsilon$ and $(\bigvee_{n=1}^{\infty}(\beta_{n})_{B})\vee \mathcal{C}=\varepsilon_{B}$. By theorem \ref{4} for $\mu_{\beta}-$a.e. $B\in X/\beta$
\begin{equation}\label{10}
h^{\mathcal{A}}_{\mu}(T,\beta_{n})\longrightarrow h^{\mathcal{A}}_{\mu}(T)
\end{equation}
and
\begin{equation}\label{11}
h^{\mathcal{A}_{B}}_{\mu_{B}}(T_{B},(\beta_{n})_{B})\longrightarrow h^{\mathcal{A}_{B}}_{\mu_{B}}(T_{B}),
\end{equation}
as $n\longrightarrow+\infty$. Then (\ref{8}) follows from (\ref{9}), (\ref{10}) and (\ref{11}) the monotone convergence theorem.
\end{proof}

Let $T$ be a measure-preserving transformation on the Lebesgue space $(X,\mathcal{B},\mu)$, refer to \cite{LQ} for the corresponding results. $T$ is called ergodic if every measurable set $A$ satisfying $T^{-1}A=A$ has either measure 1 or measure 0. If $T$ is not ergodic, then it can be decomposed into ergodic components (see \cite{Roh1}). For amenable group actions see \cite{DZ}, the case $\mu$ is ergodic in Theorem \ref{31} is well known see \cite{D} and then it is not hard to obtain Theorem \ref{31} in the general case by applying ergodic decomposition.





\end{document}